\documentclass[12pt]{amsart}
\usepackage{amssymb,amsmath,amsfonts,amsthm,nomencl,mathrsfs}
\usepackage[arrow, matrix, curve]{xy}
\usepackage[latin1]{inputenc}
\usepackage{a4wide}

\setlength{\parindent}{0pt}

\newcommand{\IC}{\mathbb{C}}
\newcommand{\IR}{\mathbb{R}}

\newcommand{\question}[1]{\leavevmode{\marginpar{\tiny%
$\hbox to 0mm{\hspace*{-0.5mm}$\leftarrow$\hss}%
\vcenter{\vrule depth 0.1mm height 0.1mm width \the\marginparwidth}%
\hbox to 0mm{\hss$\rightarrow$\hspace*{-0.5mm}}$\\\relax\raggedright #1}}}

\newcommand{\loc}{\mathrm{loc}}

\newcommand{\IFF}{\mathcal{F}}

\newcommand{\IN}{\mathbb{N}}



\theoremstyle{plain}            
\newtheorem{theorem}{theorem}[section]

\newtheorem{Corollary}[theorem]{Corollary}
\newtheorem{Theorem}[theorem]{Theorem}

\theoremstyle{definition}       
\newtheorem{Definition}[theorem]{Definition}

\newtheorem{Remark}[theorem]{Remark}

\allowdisplaybreaks[1]

\newcommand{\IMM}{\mathscr{M}}

\newcommand{\Ric}{\mathrm{Ric}}

\begin{document}

\begin{titlepage}

\author{Batu G\"uneysu}


\author{Max von Renesse}

\title{Molecules as metric measure spaces with Kato-bounded Ricci curvature}

\end{titlepage}

\begin{abstract} Set $\Psi:=-\log(\tilde{\Psi})$, with $\tilde{\Psi}>0$ the ground state of an arbitrary molecule with $n$ electrons in the infinite mass limit (neglecting spin/statistics). Let $\Sigma\subset \IR^{3n}$ be the set of singularities of the underlying Coulomb potential. We show that the metric measure space $\IMM$ given by $\IR^{3n}$ with its Euclidean distance and the measure 
$$
\mu(dx)=e^{-2\Psi(x)}dx
$$
has a Bakry-Emery-Ricci tensor which is absolutely bounded by the the function $x\mapsto |x-\Sigma|^{-1}$, which we show to be an element of the Kato class induced by $\IMM$. In addition, it is shown $\IMM$ is stochastically complete, that is, the Brownian motion which is induced by a molecule is nonexplosive, and that the heat semigroup of $\IMM$ has the $L^{\infty}$-to-Lipschitz smoothing property. Our proofs reveal a fundamental connection between the above geometric/probabilistic properties and recently obtained derivative estimates for $e^{\Psi}$ by Fournais/S\o rensen, as well as Aizenman/Simon's Harnack inequality for Schrödinger operators. Moreover, our results suggest to study general metric measure spaces having a Ricci curvature which is synthetically bounded from below/above by a function in the underlying Kato class.
\end{abstract}  

\maketitle

\section{Metric measure spaces}

Ever since the pioneering papers by Sturm \cite{sturm00,sturm} and Lott/Villani \cite{lott}, which based on the earlier results from \cite{max,otto,dario}, metric measure spaces with a Ricci curvature which is bounded below by a constant have been examined in great detail and revealed many deep geometric and analytic results and in particular stability properties. Several equivalent definitions of such a lower bound have been given in the last years, which are typically in the spirit of a (possibly rather technical) convexity assumption on a certain nonlinear functional (like for example the convexity of the entropy functional on Wasserstein space). In the situation of a weighted Riemannian manifold, which is a pair given by Riemannian manifold $M$ and a function $\Phi:M\to\IR$, one canonically gets a metric measure space by taking the geodesic distance on $M$ and the weighted Riemannian volume measure $e^{-2\Phi(x)}\mathrm{vol}(dx)$. In this case, the above convexity assumptions turn out (in their simplest dimension free form) to be equivalent to the lower boundedness of the Bakry-Emery Ricci curvature \cite{bakry}
\begin{align}\label{sqaa}
\mathrm{Ric}_{\Phi}:=\mathrm{Ric}+2\nabla^2\Phi.
\end{align}
The reader may find some of the central results on the geometry and analysis of abstract metric measures with lower bounded Ricci curvature in \cite{ags, ags2,erbar,cavaletti,cavaletti2,bacher,gigli,kuwada,naber} and the references therein. \\
The point we want to make in this note is, that, on the other hand, there exist very natural metric measure spaces whose Ricci curvature is not even locally bounded from below or above by a constant, but by a function which lies in the Kato class of the underlying metric measure space. We believe that our main result, Theorem \ref{main} below, suggests to define and investigate systematically metric measure spaces having a Ricci curvature which is bounded from below and/or above by a Kato function.\vspace{1mm}

As we will exclusively work in the setting of metric measure spaces that arise from perturbing the Lebesgue measure of the standard Euclidean metric measure space, we start by briefly recalling the notions from metric measure spaces that will be relevant for us in this special class.\vspace{2mm}

In the sequel, we denote with $(\cdot,\cdot)$ the Euclidean scalar product and with $|\cdot|$ the associated norm. With $C^{0,1}(\IR^m)$ the space of all globally Lipschitz functions on $\IR^m$, that is, the space of all $f:\IR^m\to\IC$ such that
$$
\sup_{x\neq y}|f(x)-f(y)||x-y|^{-1}<\infty,
$$
we understand $C^{0,1}_\loc(\IR^m)$ to be the space of all $f:\IR^m\to\IC$ with $\varphi f\in C^{0,1}(\IR^m)$ for all $\varphi\in  C^{\infty}_c(\IR^m)$.\vspace{1mm}

Assume we are given a real-valued function $\Phi\in  C^{0,1}_\loc(\IR^m)$ with $\Delta\Phi\in L^2_\loc(\IR^m)$. Note that, in particular, $\nabla\Phi \in L^{\infty}_\loc(\IR^m,\IR^m)$ by Rademacher's theorem.

\begin{Definition} The metric measure space $\IMM_{\Phi}$ is defined by
$$
\IMM_{\Phi}:=(\IR^{m}, |\cdot-\cdot|,\mu_{\Phi}),
$$
where $\mu_{\Phi}$ denotes the measure $\mu_{\Phi}(dx):=e^{-2\Phi} dx$. 
\end{Definition}

In view of (\ref{sqaa}), we define the Bakry-Emery Ricci tensor $\Ric_{\Phi}$ of $\IMM_{\Phi}$ by 
$$
\Ric_{\Phi}:= 2\nabla^2 \Phi\in L^2_\loc(\IR^{m},\mathrm{Mat}_{m\times m}(\IR)\big).
$$
Note that the asserted local square integrability follows from $\Phi,\Delta\Phi \in L^2_\loc(\IR^{m})$ and the Calderon-Zygmund inequality. The Cheeger energy form $Q_{\Phi}$ of $\IMM_{\Phi}$ in the Hilbert space $L^2_{\Phi}(\IR^{m})$ is given by 
$$
Q_{\Phi}(f)=\frac{1}{2}\int |\nabla f|^2  d\mu_{\Phi}
$$
with domain of definition $W^{1,2}_{\Phi}(\IR^{m})$ given by all $f\in L^2_{\Phi}(\IR^{m})$ with $\nabla f\in L^2_{\Phi}(\IR^{m},\IC^{m})$, where for $q\in [1,\infty)$ the Banach space $L^q_{\Phi}(\IR^{m})$ is defined to be space of the equivalence classes\footnote{with respect to $dx$ or equivalently $d\mu_{\Phi}$; note that $L^{\infty}_{\Phi}(\IR^{m})=L^{\infty}(\IR^{m})$.} Borel functions $f$ on $\IR^{m}$ with
$$
\int |f|^q d\mu_{\Phi}<\infty,
$$
and likewise for vector-valued functions. The \emph{weighted Laplacian} $\Delta_{\Phi}$ on $\IMM_{\Phi}$ is given by
$$
\Delta_{\Phi}=\Delta - 2(\nabla \Phi, \cdot)
$$
and its domain of definition is $W^{2,2}_{\Phi}(\IR^{m})$, the space of all $f\in L^2_{\Phi}(\IR^{m})$ with $\Delta_{\Phi} f\in L^2_{\Phi}(\IR^{m})$, and an integration by parts shows that $-\Delta_{\Phi}/2\geq 0$ is the self-adjoint operator induced by $Q_{\Phi}$. As in the proof of Theorem 11.5 in \cite{gri} one finds that the operator $-\Delta_{\Phi}/2$ is essentially self-adjoint on $C^{\infty}_c(\IR^m)$.\\
The space-time function $(t,x)\mapsto e^{\frac{t}{2}\Delta_{\Phi}}f(x)$ has a jointly continuous version for all $f\in L^2_{\Phi}(\IR^m)$, and there exists a uniquely determined a jointly continuous map
$$
(0,\infty)\times \IR^m\times \IR^m \ni (t,x,y)\longmapsto e^{\frac{t}{2}\Delta_{\Phi}}(x,y)\in [0,\infty)
$$
which for all $f\in L^2_{\Phi}(\IR^m)$, $t>0$, $x\in \IR^m$ satisfies 
	\begin{align}\label{diri}
e^{\frac{t}{2}\Delta_{\Phi}}f(x) =\int e^{\frac{t}{2}\Delta_{\Phi}}(x,y)f(y) d\mu_{\Phi}(y). 
\end{align}
This integral kernel has the following properties for all $s,t>0$, $x,y\in \IR^m$:
\begin{align*}
&e^{\frac{t}{2}\Delta_{\Phi}}(y,x)= e^{\frac{t}{2}\Delta_{\Phi}}(x,y),\\
	&e^{\frac{t+s}{2}\Delta_{\Phi}}(x,y) =\int e^{\frac{t}{2}\Delta_{\Phi}}(x,y)e^{\frac{s}{2}\Delta_{\Phi}}(y,z)d\mu_{\Phi}(z),\\
	&\int e^{\frac{s}{2}\Delta_{\Phi}}(x,y)d\mu_{\Phi}(y)\leq 1.
	\end{align*}
In particular, one can extend the definition $e^{\frac{t}{2}\Delta_{\Phi}}f(x)$ to $f\in \bigcup_{q\in [1,\infty]} L^q_{\Phi}(\IR^m)$ or all nonnegative Borel $f$'s using formula (\ref{diri}).

\begin{Definition} The Kato class $\mathcal{K}_{\Phi}(\IR^{m})$ of $\IMM_{\Phi}$ is defined by all Borel functions $v$ on $\IR^{m}$ satisfying
$$
\lim_{t\to 0}\sup_{x\in \IR^{m}}\int^t_0\int e^{s\Delta_{\Phi}}(x,y) |v(y)| d\mu_{\Phi}(y) ds=\lim_{t\to 0}\sup_{x\in \IR^{m}}\int^t_0 e^{s\Delta_{\Phi}}|v(x)|  ds=0. 
$$
\end{Definition}

In view of 
$$
\int e^{\frac{s}{2}\Delta_{\Phi}}(x,y)d\mu_{\Phi}(y)\leq 1\quad\text{ for all $s>0$, $x\in \IR^m$,}
$$
one trivially has $L^{\infty}(\IR^m)\subset \mathcal{K}_{\Phi}(\IR^{m})$, while inclusions of the type $L^{q}_{\Phi}(\IR^m)\subset \mathcal{K}_{\Phi}(\IR^{m})$ will in general depend on the geometry induced by $\Phi$. We refer the reader to \cite{aizen, Simon, peter, kt,Batu} for several abstract and Riemannian results on the Kato class. 

\begin{Remark}\label{wspoa} A simple result \cite{Simon, aizen} in this context for $\Phi=1$ is that 
$$
L^{q}(\IR^m)\subset \mathcal{K}(\IR^{m})\quad\text{for all $q>m/2$ if $m\geq 3$},
$$
and that if $v\in \mathcal{K}(\IR^{m})$ and if $T:\IR^{m'}\to \IR^m$ is a surjective linear map, then $v\circ T\in \mathcal{K}(\IR^{m'})$. 
\end{Remark}

Let $(\Omega,\IFF_*,X,(P^x_{\Phi})_{x\in\IR^{m}})$ be the canonical $\Delta_{\Phi}/2$-diffusion, which in general lives on the space of continuous paths $\Omega$ with values in one-point compactification of $\IR^{m}$. In particular, $\IFF_*$ denotes the canonical filtration of $\Omega$ which is generated by the coordinate process $X_t(\omega)=\omega(t)$. For every Borel set $A\subset \IR^m$, $t>0$, $x\in \IR^m$ one has the defining relation
\begin{align*}
P^x_{\Phi}\{A,t<\zeta\} = \int_A e^{\frac{t}{2}\Delta_{\Phi}}(x,y) d\mu_{\Phi}(y),\quad \text{with $\zeta:=\inf\{t\geq 0: X_t=\infty\}:\Omega\longrightarrow [0,\infty]$}
\end{align*}
the canonical explosion time.

\begin{Definition} The metric measure space $\IMM_{\Phi}$ is called \emph{stochastically complete}, if one has $P^x_{\Phi}\{t<\zeta\}=1$ for all $t>0$, $x\in \IR^m$.
\end{Definition}

Note that for $\Phi=1$ the above diffusion is nothing but Euclidean Brownian motion. In particular, $P^x_{\Phi}$ is obtained as the law of the solution $X^{\Phi}(x)$ of the Ito equation
\begin{align}\label{drift}
d X^{\Phi}_t(x)= - \nabla \Phi(X^{\Phi}_t(x)) dt + dW_t,\quad X^{\Phi}_0(x)=x,
\end{align}
where $W$ is a Euclidean Brownian motion.

\section{The metric measure space of a molecule}

Given 
$$
n,m\in\IN,\quad R=(\mathbf{R}_1,\dots,\mathbf{R}_m)\in \IR^{3m},\quad Z=(Z_1,\dots,Z_m)\in \IN^{3m}, 
$$
and the potential $V:\IR^{3n}\to\IR$ 
$$
V(\mathbf{x}_1, \dots,\mathbf{x}_n):=-\sum_{j=1}^m\sum_{i=1}^n  \frac{Z_j}{|\mathbf{x}_i-\mathbf{R}_j|}+ \sum_{1\leq i<j\leq N} \frac{1}{|\mathbf{x}_i-\mathbf{x}_j|},
$$
consider the Hamilton operator $-\Delta/2+V$ in $L^2(\IR^{3n})$ of a molecule with $n$ electrons and $m$ nuclei, where in the infinite mass limit the the $j$-th nucleus is considered to be fixed in $\mathbf{R}_j$. In addition, we have ignored spin/statistics and we have set the elementary charge equal to $1$. This operator is essentially self-adjoint on $C^{\infty}_c(\IR^{3n})$ \cite{Kato} and its domain of definition is $W^{2,2}(\IR^{3n})$. Let $\lambda>0$ be the corresponding ground state energy and $0<\tilde{\Psi}\in W^{2,2}(\IR^{3n})$ the ground state, so 
$$
(-\Delta+V) \tilde{\Psi}=\lambda\tilde{\Psi}.
$$
We know from \cite{Kato} that  $\tilde{\Psi}\in C^{0,1}(\IR^{3n})$. In fact, there exists a constant $C>0$ such that 
$$
\text{  $|\nabla \tilde{\Psi}|\leq C$ on $\IR^{3n}\setminus \Sigma$},
$$
where
$$
\Sigma:= \Big\{x\in\IR^{3n}: \prod^m_{j=1}\prod^n_{i=1}|\mathbf{x}_i-\mathbf{R}_j|\prod_{1\leq i<j\leq N}|\mathbf{x}_i-\mathbf{x}_j|=0  \Big\} 
$$
 is the set of singularities of the underlying Coulomb potential, a closed set of measure zero. Of course, by local elliptic regularity, $\tilde{\Psi}\in C^{\infty}(\IR^{3n}\setminus\Sigma)$.

\begin{Definition} With $\Psi:=- \log(\tilde{\Psi})$, the metric measure space 
$$
\IMM_{\Psi}=(\IR^{3n}, |\cdot-\cdot|,\mu_{\Psi}),
$$
is called \emph{a molecular metric measure space}.
\end{Definition}

\begin{Remark}\label{swsa33} 1. Clearly $\Psi\in  C^{0,1}_\loc(\IR^{3n})$ and it follows from the formula
$$
\Delta \Psi= - 2 (1/\tilde{\Psi})\Delta\tilde{\Psi}-2 (1/\tilde{\Psi}^2)\sum^{3n}_{j=1}(\partial_i\tilde{\Psi})^2
$$
in combination with $|\partial_i\tilde{\Psi}|\leq C$, the continuity of $\tilde{\Psi}$ and $\Delta\tilde{\Psi}\in L^2(\IR^{3n})$ that $\Delta\Psi\in L^2_\loc(\IR^{3n})$, as required for the theory from the previous section.\\
2. The operator $-\Delta/2+V$ in $L^2(\IR^{3n})$ is unitarily equivalent to the operator $-\Delta_{\Psi}/2+\lambda$ in $L^2_{\Psi}(\IR^{3n})$
via
$$
L^2_{\Psi}(\IR^{3n}) \longrightarrow L^2(\IR^{3n})
,\quad U_{\Psi}  f(x)= e^{-\Psi(x)}f(x).
$$
Indeed, as both operators are essentially self-adjoint on $C^{\infty}_c(\IR^{3m})$ in their respective Hilbert spaces, it suffices to check
$$
U_{\Psi}\left(-\Delta_{\Psi}/2+\lambda\right)f= (-\Delta/2+V)U_{\Psi}f\quad\text{ for all $f\in C^{\infty}_c(\IR^{3m})$,}
$$
which is a standard calculation.\\
3. It is well-known \cite{Simon,aizen} that $V\in \mathcal{K}(\IR^{3n})$ (in fact, this is a simple consequence of Remark \ref{wspoa}).
\end{Remark}

Here comes our main result:

\begin{Theorem}\label{main} a) There exist constants $A_1,A_2>0$ such that 
$$
\big|(\Ric_{\Psi}(x)v\big|\leq  \big(A_1+  A_2|x-\Sigma_{}|^{-1}\big)|v|,\quad\text{for all $(x,v)\in (\IR^{3n}\setminus \Sigma)\times \IR^{3n}$}.
$$
b) One has $|\bullet-\Sigma|^{-1}\in \mathcal{K}_{\Psi}(\IR^{3n})$.\\
c) $\IMM_{\Psi}$ is stochastically complete.\\
d) One has the following Lipschitz smoothing property,
$$
|e^{\frac{t}{2}\Delta_{\Psi}}f(x)-e^{\frac{t}{2}\Delta_{\Psi}}f(y)|\leq t^{-1/2} e^{C_2t} |x-y|\quad\text{for all $t>0$, $f\in L^{\infty}(\IR^{3n})$, $x,y\in \IR^{3n}$.}
$$
\end{Theorem}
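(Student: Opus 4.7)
The plan for \textbf{(a)} is to differentiate $\Psi=-\log\tilde\Psi$ twice on $\IR^{3n}\setminus\Sigma$, which yields
\[
\nabla^2\Psi = e^{-\Psi}\nabla^2 e^\Psi-\nabla\Psi\otimes\nabla\Psi,
\]
and then to invoke the Fournais--S{\o}rensen pointwise derivative estimates for $e^\Psi=1/\tilde\Psi$: these should supply a uniform bound on $|\nabla\Psi|$ together with a pointwise estimate of the form $|\nabla^2 e^\Psi|(x)\le (c_1+c_2|x-\Sigma|^{-1})e^{\Psi(x)}$ on $\IR^{3n}\setminus\Sigma$, and dividing by $e^\Psi$ then yields the claimed bound on $\Ric_\Psi = 2\nabla^2\Psi$. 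Part \textbf{(c)} falls out of the unitary equivalence in Remark \ref{swsa33}.2: since $\int e^{-2\Psi}\,dx = \|\tilde\Psi\|_2^2 < \infty$, the constant function $1$ lies in $L^2_\Psi(\IR^{3n})$ with $U_\Psi(1) = \tilde\Psi$, so the eigenfunction identity $H\tilde\Psi = \lambda\tilde\Psi$ immediately gives $e^{\frac{t}{2}\Delta_\Psi}1 = U_\Psi^{-1} e^{-t(H-\lambda)}\tilde\Psi = 1$ in $L^2_\Psi$; continuity of $e^{\frac{t}{2}\Delta_\Psi}f(x)$ in $x$ upgrades this to the pointwise identity $\int e^{\frac{t}{2}\Delta_\Psi}(x,y)\,d\mu_\Psi(y) = 1$, which is precisely $P^x_\Psi\{t<\zeta\} = 1$.

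For \textbf{(b)}, the first step is the elementary bound $|x-\Sigma|^{-1}\leq\sum_{i,j}|\mathbf{x}_i-\mathbf{R}_j|^{-1}+\sum_{i<j}|\mathbf{x}_i-\mathbf{x}_j|^{-1}$; each summand is the pullback of $y\mapsto|y|^{-1}\in\mathcal{K}(\IR^3)$ under a surjective linear projection $\IR^{3n}\to\IR^3$, so by Remark \ref{wspoa} each, and hence $|x-\Sigma|^{-1}$, belongs to the classical Kato class $\mathcal{K}(\IR^{3n})$. To transfer this to $\mathcal{K}_\Psi(\IR^{3n})$ I would use the Doob/Feynman--Kac representation
\[
e^{\frac{s}{2}\Delta_\Psi} w(x) = \frac{e^{s\lambda}}{\tilde\Psi(x)}\, E^x\!\Big[\tilde\Psi(B_s)\,e^{-\int_0^s V(B_u)\,du}\,w(B_s)\Big]
\]
derived from $U_\Psi \circ e^{\frac{s}{2}\Delta_\Psi} = e^{-s(H-\lambda)} \circ U_\Psi$ and Feynman--Kac, and then tame the divergent prefactor $1/\tilde\Psi(x)$ via the Aizenman--Simon elliptic Harnack inequality for the positive Schr{\"o}dinger eigenfunction $\tilde\Psi$: on a ball $B(x,r)$ this absorbs $\tilde\Psi(B_s)/\tilde\Psi(x)$ against $|v(B_s)|$, while the complementary off-diagonal contribution is handled by Aizenman--Simon's Gaussian upper bounds on the Feynman--Kac kernel of $V\in\mathcal{K}(\IR^{3n})$ combined with Khasminskii's lemma, yielding $\sup_x \int_0^t e^{\frac{s}{2}\Delta_\Psi}|v|(x)\,ds \to 0$ as $t\to 0^+$.

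For \textbf{(d)} I would apply a Bismut--Elworthy--Li derivative formula to the SDE (\ref{drift}): for $f\in L^\infty(\IR^{3n})$,
\[
\nabla e^{\frac{t}{2}\Delta_\Psi} f(x) = \frac{1}{t}\, E\!\left[f(X_t^\Psi(x)) \int_0^t J_s(x)^*\,dW_s\right],
\]
where the Jacobian $J_s$ solves $\dot J_s = -\tfrac12\Ric_\Psi(X_s^\Psi)J_s$ and hence $|J_s|\le \exp\!\big(\tfrac12\!\int_0^s |\Ric_\Psi|(X_r^\Psi)\,dr\big)$. By parts (a) and (b), $|\Ric_\Psi|$ is bounded above by a Kato function on $\IMM_\Psi$, so Khasminskii's lemma applied to the now stochastically complete $\Delta_\Psi/2$-diffusion produces $E\!\left[\exp\!\left(c\!\int_0^t|\Ric_\Psi|(X_s^\Psi)\,ds\right)\right] \le e^{C_2 t}$; combining with Burkholder--Davis--Gundy gives $\|\nabla e^{\frac{t}{2}\Delta_\Psi}f\|_\infty \leq t^{-1/2}e^{C_2 t}\|f\|_\infty$, and integration along the segment joining $x$ and $y$ produces the stated Lipschitz bound. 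The most delicate step I expect is (b): the classical-to-weighted Kato transfer in the presence of the divergent Doob factor $1/\tilde\Psi$ is precisely where the Aizenman--Simon Harnack inequality becomes genuinely indispensable.
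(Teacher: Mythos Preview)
For (a), (b), (c) your outline is essentially sound, with one correction and one comparison. In (a) you attribute the needed ratio bound to Fournais--S{\o}rensen alone, but their estimate controls $|\partial^\alpha\tilde\Psi(x)|$ by $\min(1,|x-\Sigma|)^{1-|\alpha|}\sup_{B(x,1/2)}\tilde\Psi$; replacing the sup by $\tilde\Psi(x)$ is precisely the Aizenman--Simon Harnack inequality, so that ingredient is already indispensable in (a), not only in (b). Your (c) is cleaner than the paper's: the paper proves stochastic completeness via Girsanov and the martingale property of the exponential of $-\int_0^t(\nabla\Psi(X_s),dX_s)-\tfrac12\int_0^t|\nabla\Psi(X_s)|^2\,ds$ (using $|\nabla\Psi|\in L^\infty$), whereas your spectral argument via $1\in L^2_\Psi$ and $U_\Psi 1=\tilde\Psi$ being an eigenfunction is shorter and avoids stochastic calculus entirely. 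For (b) the paper takes a more direct route than your Doob/Feynman--Kac transfer: it applies Girsanov, uses $|\nabla\Psi|\in L^\infty$ to dominate the Radon--Nikodym density, applies H\"older, and reduces to showing $|\bullet-\Sigma|^{-q}\in\mathcal{K}(\IR^{3n})$ for some $q>1$ under the \emph{unweighted} Wiener measure. Your off-diagonal control of $\tilde\Psi(B_s)/\tilde\Psi(x)$ would in the end also rest on $|\nabla\log\tilde\Psi|\in L^\infty$ (to get at-most-exponential growth in $|B_s-x|$, absorbed by the Gaussian), so the two routes share the same key input; Girsanov simply packages it more efficiently.

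There is, however, a genuine gap in (d). The drift $-\nabla\Psi$ is merely bounded measurable (smooth only off $\Sigma$), and $\Ric_\Psi=2\nabla^2\Psi$ blows up along $\Sigma$, so neither the Jacobian flow $J_s$ nor the Bismut--Elworthy--Li identity is available for the SDE \eqref{drift} as written; your Gronwall bound on $|J_s|$ presupposes an object that has not been shown to exist. The paper circumvents this by mollification: set $\Psi^\epsilon=e^{\frac{\epsilon}{2}\Delta}\Psi$, so that $\|\nabla\Psi^\epsilon\|_\infty\le\|\nabla\Psi\|_\infty$ and $|\Ric_{\Psi^\epsilon}|\le k^\epsilon:=e^{\frac{\epsilon}{2}\Delta}k$ with $k=|\bullet-\Sigma|^{-1}$. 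Bismut then applies rigorously to the smooth diffusion $X^\epsilon$, and the Khasminskii-type bound $\sup_{\epsilon,x}E^x_{\Psi^\epsilon}\big[e^{\int_0^t k^\epsilon(X_s)\,ds}\big]\le e^{Ct}$ is established \emph{uniformly in $\epsilon$} by rerunning the estimates of (b) with $(\Psi^\epsilon,k^\epsilon)$ in place of $(\Psi,k)$ and using Jensen plus the semigroup property of $e^{\frac{\cdot}{2}\Delta}$. The resulting Lipschitz estimate for $e^{\frac{t}{2}\Delta_{\Psi^\epsilon}}$ is then passed to the limit $\epsilon\to0$ via Mosco convergence of the semigroups together with $C^{0,1}_{\mathrm{loc}}$-compactness. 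Without this regularization-and-limit step your argument for (d) does not close.
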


\begin{proof} Before we come to the actual proof of the statements, we record some central results for atomic Schrödinger operators:
\begin{itemize}
\item[i)] As $V\in\mathcal{K}(\IR^{3m})$, Aizenman/Simon's Harnack inequality \cite{aizen} for positive eigenfunctions of Schrödinger operators with Kato potentials states the existence of a constant $C$ such that 
\begin{align*}
\sup_{\{y\in\IR^{3n}:|y-x|<1/2\}} \tilde{\Psi}(y)\leq C \inf_{\{y\in\IR^{3n}|y-x|<1/2\}} \tilde{\Psi}(y)\quad\text{ for all $x\in \IR^{3n}$}.
\end{align*}
\item[ii)] Recently established estimates by Fournais/S\o rensen (see also \cite{hof} for $\alpha=1$) state that for every multi-index $\alpha$ there exists \cite{fournais} a constant $c_{\alpha}$ with
\begin{align*}
|\partial^{\alpha}\tilde{\Psi}(x)|\leq c_{\alpha} \min(1,|x-\Sigma|)^{1-|\alpha|} \sup_{\{y\in\IR^{3n}:|y-x|<1/2\}}  \tilde{\Psi}(y)  \quad\text{for all $x\in \IR^{3n}\setminus \Sigma$}.
\end{align*}
\item[iii)] As a consequence of i), ii) we get that for every multi-index $\alpha$ one has
\begin{align*}
\frac{|\partial^{\alpha}\tilde{\Psi}(x)|}{\tilde{\Psi}(x)}&\leq c_{\alpha} \min(1,|x-\Sigma|)^{1-|\alpha|} \frac{\sup_{\{y\in\IR^{3n}:|y-x|<1/2\}}  \tilde{\Psi}(y) }{\inf_{\{y\in\IR^{3n}:|y-x|<1/2\}}  \tilde{\Psi}(y) }\\
& \leq c'_{\alpha}C \min(1,|x-\Sigma|)^{1-\alpha},\quad\text{for all $x\in \IR^{3n}\setminus \Sigma$.}
\end{align*}
\item[iv)] For all $q\in [1,\infty)$ the process
$$
\quad \exp\left(- \int^{\cdot}_0 (\nabla (q\Psi)(X_r),dX_r)-\frac{1}{2}\int^{\cdot}_0 |\nabla (q\Psi)(X_r)|^2dr  \right)
$$
is a martingale under $P^x$, where $\int^{\cdot}_0 (\nabla (q\Psi)(X_r),dX_r)$ denotes the Ito-integral
$$
\int^{\cdot}_0 (\nabla (q\Psi)(X_r),dX_r)=q\sum^{3n}_{j=1}\int^{\cdot}_0 (\nabla \Psi)_j(X_r)dX^j_r.
$$
Indeed, by Novikov's theorem it suffices to show that for all $t>0$ one has
\begin{align}\label{khash}
E^x\left[\exp\left( \frac{1}{2}\int^{t}_0 |\nabla (q\Psi)(X_r)|^2dr  \right) \right]<\infty,
\end{align}
which trivially follows from 
\begin{align}\label{opssa}
|\partial_{i}\Psi(x)|=\frac{|\partial_{i}\tilde{\Psi}(x)|}{\tilde{\Psi}(x)} \leq C',
\end{align}
by inequality iii).\\
\item[v)] Girsanov's theorem and uniqueness in law for the solution of the corresponding SDE imply that for all $t>0$, $x\in\IR^{3n}$ we have
$$
dP^x_{\Psi}|_{\IFF_t\cap\{t<\zeta\}}= \exp\left( -\int^t_0 (\nabla \Psi(X_s),dX_s)-\frac{1}{2}\int^t_0 |\nabla \Psi(X_s)|^2ds  \right)d P^x.
$$
\end{itemize}

a) One calculates
$$
\big(\mathrm{Ric}_{\Psi}\big)_{ij}= - \left \{2 (1/\tilde{\Psi})\partial_i\partial_j\tilde{\Psi}-2 (1/\tilde{\Psi}^2)\partial_i\tilde{\Psi}\cdot\partial_j\tilde{\Psi}\right\}\quad\text{ in $\IR^{3n}\setminus\Sigma$.}
$$
In view of the above inequality iii), the absolute value of the first summand can be controlled by $\leq C|\cdot-\Sigma|^{-1}$, while the absolute value of the second summand can be estimated by a constant, again using iii) above.\\
b) Using v), for all $q\in (1,\infty)$ with $q^*\in (1,\infty)$ its Hölder dual, and all $t\geq s\geq 0$, and all $x$,
\begin{align*}
&=\int e^{\frac{s}{2}\Delta_{\Psi}}(x,y) |y-\Sigma|^{-1} d\mu(y) \\
&=E^x_{\Psi}\left[1_{\{s<\zeta\}}|X_s-\Sigma|^{-1}\right]\\
&=E^x\left[\exp\left( -\int^{t}_0 (\nabla \Psi(X_r),dX_r)-\frac{1}{2}\int^{s}_0 |\nabla \Psi(X_r)|^2dr  \right) |X_s-\Sigma|^{-1}\right]\\
&\leq E^x\left[\exp\left(- q^*\int^{s}_0 (\nabla \Psi(X_r),dX_r)-\frac{q^*}{2}\int^{s}_0 |\nabla (\Psi(X_r)|^2dr  \right)\right]^{1/q}\\
&\quad\times E^x\left[ |X_s-\Sigma|^{-q}\right]^{1/q}\\
&\leq e^{C_q t}E^x\left[\exp\left( \int^{s}_0 (\nabla (q^*\Psi)(X_r),dX_r)  -\frac{1}{2}\int^{s}_0 |\nabla (q^*\Psi(X_r)|^2dr  \right)\right]^{1/q}\\
&\quad\times E^x\left[ |X_s-\Sigma|^{-q}\right]^{1/q},
\end{align*}
where we have estimated as follows:
\begin{align*}
&-q^*\int^{s}_0 (\nabla \Psi(X_r),dX_r)-\frac{q^*}{2}\int^{s}_0 |\nabla (\Psi(X_r)|^2dr\\
&\leq -q^*\int^{s}_0 (\nabla \Psi(X_r),dX_r)\\
&= -\int^{s}_0 (\nabla (q^*\Psi)(X_r),dX_r)-\frac{1}{2}\int^{s}_0 |\nabla (q^*\Psi(X_r)|^2dr+\frac{1}{2}\int^{s}_0 |\nabla (q^*\Psi(X_r)|^2dr\\
&\leq - \int^{s}_0 (\nabla (q^*\Psi)(X_r),dX_r)-\frac{1}{2}\int^{s}_0 |\nabla (q^*\Psi(X_r)|^2dr+C'_{q}s,\\
\end{align*}
using (\ref{opssa}). By iv) we have 
$$
E^x\left[\exp\left( -\int^{s}_0 (\nabla (q^*\Psi)(X_r),dX_r)  -\frac{1}{2}\int^{s}_0 |\nabla (q^*\Psi(X_r)|^2dr  \right)\right]=1,
$$
and by Jenßen's inequality, for all $t\leq 1$,
$$
\int^t_0E^x\left[ |X_s-\Sigma|^{-q}\right]^{1/q}ds\leq \left(\int^t_0E^x\left[ |X_s-\Sigma|^{-q}\right]ds\right)^{1/q}.
$$
In in order to estimate this, we record that for all $y=(\mathbf{y}_1,\dots,\mathbf{y}_n)\in \IR^{3n}$ one has
\begin{align*}
&|y-\Sigma|^{-q}\\
&= \min\Big\{ |\mathbf{y}_i-\mathbf{R}_j|,\frac{1}{\sqrt{2}}|\mathbf{y}_k-\mathbf{y}_l|: i,k,l\in\{1,\dotsm,n\},k<l, j\in\{1,\dots,m\}  \Big\}^{-q}\\
&\leq \sum_{j=1}^m\sum_{i=1}^n  |\mathbf{y}_i-\mathbf{R}_j|^{-q}+ \sum_{1\leq k<l\leq n} \left(\frac{1}{\sqrt{2}}|\mathbf{y}_k-\mathbf{y}_l|\right)^{-q},
\end{align*}
and the latter function of $y$ is in $\mathcal{K}(\IR^{3n})$ for some $q>1$ by Remark \ref{wspoa} (just note that for all $\mathbf{R}\in \IR^3$ the function 
$$
\mathbf{y}\mapsto |\mathbf{y}-\mathbf{R}|^{-3/2}=1_{\{\mathbf{z}:|\mathbf{z}-\mathbf{R}|\leq 1\}}(\mathbf{y})|\mathbf{y}-\mathbf{R}|^{-3/2}+1_{\{\mathbf{z}:|\mathbf{z}-\mathbf{R}|>1\}}(\mathbf{y})|\mathbf{y}-\mathbf{R}|^{-3/2}
$$
is an element of $L^{3/2}(\IR^3)+L^{\infty}(\IR^3)$). Thus for such a $q>1$ we arrive at
$$
\sup_{x\in \IR^{3n}}\Big(\int^t_0E^x\left[ |X_s-\Sigma|^{-q}\right]ds\Big)^{1/q}=\Big(\sup_{x\in \IR^{3n}}\int^t_0E^x\left[ |X_s-\Sigma|^{-q}\right]ds\Big)^{1/q}\to 0
$$
as $t\to 0+$, which completes the proof of a).\\
c) Using v), iv) we have
\begin{align*}  
&\int e^{\frac{t}{2}\Delta_{\Psi}}(x,y) d\mu(y)\\
& =P^x_{\Psi}\{t<\zeta\}\\
&= E^x\left[\exp\left( -\int^{t}_0 (\nabla \Psi(X_s),dX_s)-\frac{1}{2}\int^{t}_0 |\nabla \Psi(X_s)|^2ds  \right) \right]|_{t=0}\\
&=1,
\end{align*}
completing the proof.\\
d) We proceed by using the Bismut-Elworthy-Li representation of the derivative of a family of smoothened versions of the semigroup $e^{\frac{t}{2}\Delta_{\Psi}}$. To this aim fix $\epsilon >0$ and assume for a moment that we replace $\Psi$ by a mollified version $\Psi\epsilon:= e^{\frac \epsilon 2 \Delta} \Psi$, $e^{\frac t 2 \Delta}$ denotes the standard Euclidean heat semigroup. Let $\mathrm{Ric}_{\Psi^\epsilon}$ and $e^{\frac{t}{2}\Delta_{\Psi^\epsilon}}$ be the corresponding Ricci tensor and corresponding semigroup respectively which are associated to this choice of $\Phi := \Psi^\epsilon$. Then $\Psi^\epsilon$ is smooth, $\|\nabla \Psi_ \epsilon\|_\infty \leq \|\nabla \Psi\|_\infty$ and  $|\mathrm{Ric}_{\Psi^\epsilon}| \leq k^{\epsilon}$ where $k^\epsilon:= e^{\frac \epsilon 2 \Delta} k$ with $k(\bullet):= |\bullet-\Sigma|^{-1}$. Fix $x\in\IR^{3n}$, $t>0$. In view of (\ref{drift}), let $X^{\epsilon}$ be the solution of
$$
d X^{\epsilon}_s= - \nabla \Psi^{\epsilon}(X^{\epsilon}_s) ds + dW_s,\quad X^{\epsilon}_0=x
$$
on some filtered probability space whose expectation is denoted by $E[\bullet]$, where $W$ is a Euclidean Brownian motion in $\IR^{3n}$. Following the presentation in \cite{thth}, let the $\mathrm{End}((\IR^{3n})^*)$-valued process $Q^{\epsilon}$ be defined as the solution of the ODE
$$
d Q^{\epsilon}_s = - \frac 1 2 Q^{\epsilon}_s\mathrm{Ric}_{\Psi^{\epsilon}}(X^{\epsilon}_s)ds,\quad Q^{\epsilon}_0=1, 
$$
which admits the bound
\[ |Q^{\epsilon}_\bullet| \leq e^{ \frac 1 2 \int_0^\bullet k^{\epsilon}(X^{\epsilon}_s) ds} \] 
due to $|\mathrm{Ric}_{\Psi^\epsilon}|\leq k^{\epsilon}$ and Gronwall's inequality. \\
Assume for the moment that $f$ is smooth and compactly supported. Combining the parabolic PDE solved by the derivative $d e^{\frac{t-\bullet}{2}\Delta_{\Psi^{\epsilon}}}$ of the semigroup with Ito's formula for $X^\epsilon$ we see that process
$$
N^{\epsilon}_\bullet :=Q^\epsilon_\bullet d e^{\frac{t-\bullet}{2}\Delta_{\Psi^{\epsilon}}}f(X^{\epsilon}_\bullet)
$$  
with values in $(\IR^{3n})^*$ is a local martingale. Thus, if $\ell$ is an adapted $\IR^{3n}$-valued process with paths in the Cameron-Martin space, then
\[
\widetilde{N^{\epsilon}_\bullet}:= Q^{\epsilon}_\bullet d e^{\frac{t-\bullet}{2}\Delta_{\Psi^{\epsilon}}}f(X^\epsilon_\bullet ){\ell}_\bullet -
e^{\frac{t-\bullet}{2}\Delta_{{\Psi^{\epsilon}}}}f(X^\epsilon_\bullet )\int_0^{\bullet}\big( (Q^{\epsilon}_s)^{\mathrm{tr}}\dot{\ell}_s,dW_s\big)\]
is a local martingale in $[0,t]$, with $(Q^{\epsilon}_s)^{\mathrm{tr}}$ the transpose of $Q^{\epsilon}_s$. Set now $\ell:=t^{-1}(t-\bullet)v$. We are going to show in a moment that there exists a constant $C$ such that for all $t'>0$ one has  
\begin{align}\label{aux}
 \sup_{\epsilon' > 0 } \sup_{x' \in \mathbb R^{3n}}  E^{x'}_{\Psi^{\epsilon'}}\left[e^{\int_0^{t'} k^{\epsilon'}(X_r) dr} \right] \leq e^{C t' }. 
\end{align}
It then follows from the Burkholder-Davis-Gundy inequality that the process $\widetilde{N^{\epsilon}}$ is actually a true martingale in $[0,t]$ and thus has a constant expectation value, and by passing to expectations we obtain the Bismut derivative formula  
\begin{align*}
d e^{\frac{t}{2}\Delta_{\Psi^{\epsilon}}}f(x)v=E\left[{\widetilde{N^{\epsilon}_0}}\right]= E\left[{\widetilde{N^{\epsilon}_t}}\right]= -E\left[f(X^{\epsilon}_t)\int^{t}_0\big(  (Q^{\epsilon}_s)^{\mathrm{tr}}\dot{\ell}_s,dW_s\big)\right].\end{align*}
By the approximation procedure from the proof of Theorem 1.3 iii) in \cite{braun}, the latter formula extends to $f\in L^{\infty}(\IR^{3n})$. It then follows from the Burkholder-Davis-Gundy inequality that
\[
 d e^{\frac{t}{2}\Delta_{\Psi^{\epsilon}}}f(x)v \leq \frac 1 {\sqrt t } |v| \|f\|_\infty \left(E^x_{\Psi^{\epsilon}}\left[e^{\int_0^t k^\epsilon(X^\epsilon_s) ds }\right]\right)^{\frac 1 2},
\]
which together with (\ref{aux}) implies
\[ 
| e^{\frac{t}{2}\Delta_{\Psi^{\epsilon}}}f(x) - e^{\frac{t}{2}\Delta_{\Psi^{\epsilon}}}f(x)| \leq \frac 1 {\sqrt t } e^{Ct} \|f\|_\infty |x-y|.
\]

Since the generator of the semigroup $e^{\frac{t}{2}\Delta_{\Psi}}$ is essentially self-adjoint on $L^2(\IR^{3n})$ an application of e.g.\  \cite[theorem 1.1.]{MR2130907} shows that  for  vanishing $\epsilon$ the family of semigroups $e^{\frac{t}{2}\Delta_{\Psi^\epsilon}}$ converges to $e^{\frac{t}{2}\Delta_{\Psi}}$ in the sense of Mosco. By uniform continuity and compactness in $C^{0,1}_{\textrm{loc}}(\mathbb R^{3n})$ we can thus also pass to the limit in the previous inequality giving the claimed Lipschitz estimate.  

\smallskip

It remains to show that we find a constant $C$ such that for all $t>0$ one has  

\[ \sup_{\epsilon > 0 } \sup_{x \in \mathbb R^{3n}}  E^x_{\Psi^{\epsilon}}\left[e^{\int_0^t k^\epsilon(X^\epsilon_s) ds} \right] \leq e^{C t }. \]

To see this we repeat the estimates as conducted in b) with the functions $k = |\bullet-\Sigma|^{1}$ and $\Psi$ replaced by $k^\epsilon$ and $\Psi^\epsilon$ respectively, which for given $q>1$ yields a constant $C= C(\|\nabla \Psi\|_\infty,q)$ such that one gets the first inequality in 
 \begin{align*}
 e^{\frac{t}{2}\Delta_{\Psi^{\epsilon}}} k^\epsilon (x)  & \leq e^{Ct} \left(e^{\frac{t}{2}\Delta} [k^\epsilon]^q (x)\right)^\frac 1 q \\
 & \leq 
e^{Ct} \left(e^{\frac{t}{2}\Delta} (k^q)^\epsilon (x)\right)^\frac 1 q\\
& =  
e^{Ct} \left(e^{\frac{t+\epsilon}{2}\Delta} (k^q) (x)\right)^\frac 1 q 
\end{align*}
where we have used Jen\ss{}en's inequality in the the second step. As a consequence, for $t \leq 1 $ we find
\begin{align*}
 \int_0^t   e^{\frac{s}{2}\Delta_{\Psi^{\epsilon}}} k^\epsilon (x) ds & \leq 
 e^{C} \int_0^t \left(e^{\frac{s+\epsilon}{2}\Delta} (k^q) (x)\right)^\frac 1 q ds \\
 & \leq 
 e^{C} \left(\int_0^t e^{\frac{s+\epsilon}{2}\Delta} (k^q) (x)ds\right)^\frac 1 q \\
 & = e^{C} \left(e^{\frac{\epsilon}{2}\Delta} \left[\int_0^t e^{\frac{s}{2}\Delta} (k^q)ds\right](x)\right)^\frac 1 q. 
\end{align*}

From b) we know there exists some $q>1$ such that for small enough $t$ the function $\mathbb R^{3n} \ni y \mapsto \int_0^t e^{\frac{s}{2}\Delta} (k^q)(y)ds$ can be made arbitrarily small uniformly in $y$, hence also its mollified version appearing in the last expression above. In particular, for any $0<\alpha <1$ there is some $0<t_0 <1$  for which    
\[ \sup_{t \leq t_0} \sup_{x\in \mathbb R^{3n}} \int_0^t   e^{\frac{s}{2}\Delta_{\Psi^{\epsilon}}} k^\epsilon (x) ds\leq \alpha, \]
uniformly in $\epsilon >0$. Combining the Markov property of $X^\epsilon$ with the Khasminski lemma (as in e.g.\ \cite[Lemma A.5]{MR3735415}) we finally obtain
\[ 
 E^x_{\Psi^{\epsilon}}\left[e^{\int_0^t k^\epsilon(X^\epsilon_s) ds} \right] \leq \left(\frac{1}{1-\alpha}\right)^{\frac{t}{t_0}}
\]
uniformly in $x \in \mathbb R^{3n}$, $\epsilon >0$. 
\end{proof}

As a corollary to the stochastic completeness of $\IMM_{\Psi}$ and the Feynman-Kac formula we get the following seemingly completely new formula:

\begin{Corollary} For all $t>0$, $x\in\IR^{3n}$ one has
\begin{align}\label{ende}
 E^x\left[e^{-\int^t_0V(X_s)ds}/\tilde{\Psi}(X_t)\right]=e^{-t\lambda}/\tilde{\Psi}(x).
\end{align}
\end{Corollary}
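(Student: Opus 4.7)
The plan is to combine the Feynman-Kac representation of the semigroup $e^{-t(-\Delta/2+V)}$ under Wiener measure with the stochastic completeness of $\IMM_{\Psi}$ established in Theorem \ref{main}(c), using the Girsanov identity v) from the proof above as the bridge between $P^x$ and the law $P^x_{\Psi}$ of the $\Psi$-diffusion.

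First I would apply It\^o's formula to $\Psi=-\log\tilde{\Psi}$ along the coordinate process under $P^x$ in order to rewrite the Radon-Nikodym density
\[
\exp\!\left(-\int_0^t(\nabla\Psi(X_s),dX_s)-\tfrac{1}{2}\int_0^t|\nabla\Psi(X_s)|^2\,ds\right)
\]
from v) as an explicit pointwise multiplicative functional. The key algebraic identities are $\nabla\Psi=-\nabla\tilde{\Psi}/\tilde{\Psi}$ and $\Delta\Psi-|\nabla\Psi|^2=-\Delta\tilde{\Psi}/\tilde{\Psi}$, which, combined with the eigenvalue equation in the form $-\Delta\tilde{\Psi}/\tilde{\Psi}=2\lambda-2V$, collapse the exponent to $\Psi(x)-\Psi(X_t)+\lambda t-\int_0^t V(X_s)\,ds$, giving
\[
\frac{dP^x_{\Psi}}{dP^x}\bigg|_{\IFF_t\cap\{t<\zeta\}}=\frac{\tilde{\Psi}(X_t)}{\tilde{\Psi}(x)}\, e^{\lambda t}\, e^{-\int_0^t V(X_s)\,ds}.
\]
Stochastic completeness $P^x_{\Psi}\{t<\zeta\}=1$ then forces the $P^x$-expectation of this density to equal one, and rearranging the factors $\tilde{\Psi}(x)$, $e^{\lambda t}$, $e^{-\int V}$ yields (\ref{ende}).

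The only delicate step is the It\^o expansion of $\Psi(X_\cdot)$, since $\Psi$ lies merely in $C^{0,1}_{\loc}$ with $\Delta\Psi\in L^2_{\loc}$ and becomes singular on the Coulomb set $\Sigma$. I would handle this either by mollifying $\Psi$ and passing to the limit with the help of the pointwise derivative estimates iii) from the proof above, or by observing that under $P^x$ the Brownian path meets the codimension-three set $\Sigma$ on a Lebesgue-null set of times, so a localized It\^o formula off $\Sigma$ suffices. Both routes use only the bounds on $\nabla\tilde{\Psi}/\tilde{\Psi}$ and the local square integrability of $\Delta\tilde{\Psi}$ already exploited in Section 2, so no new technical ingredient is needed beyond what has been established for Theorem \ref{main}.
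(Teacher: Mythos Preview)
Your It\^o computation is correct and yields the pathwise identity
\[
\frac{dP^x_{\Psi}}{dP^x}\bigg|_{\IFF_t}
=\frac{\tilde{\Psi}(X_t)}{\tilde{\Psi}(x)}\,e^{\lambda t}\,e^{-\int_0^t V(X_s)\,ds},
\]
but the final ``rearranging'' step is where the argument breaks. Setting the $P^x$-expectation of this density equal to $1$ gives
\[
E^x\!\left[\tilde{\Psi}(X_t)\,e^{-\int_0^t V(X_s)\,ds}\right]=e^{-\lambda t}\,\tilde{\Psi}(x),
\]
which is the classical Feynman--Kac identity for the eigenfunction $\tilde{\Psi}\in L^2$ itself and does not require stochastic completeness at all (it is already equivalent to the martingale property iv)). The Corollary, however, has $1/\tilde{\Psi}(X_t)$ inside the expectation, and you cannot pass from one to the other by moving constants across the expectation: the random factor $\tilde{\Psi}(X_t)$ sits on the wrong side. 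Equivalently, via your Girsanov identity the Corollary is the statement $E^x_{\Psi}\big[1_{\{t<\zeta\}}\,\tilde{\Psi}(X_t)^{-2}\big]=\tilde{\Psi}(x)^{-2}$, i.e.\ that $e^{2\Psi}$ is invariant under $e^{t\Delta_{\Psi}/2}$, which is \emph{not} a consequence of $P^x_{\Psi}\{t<\zeta\}=1$ and is in fact false at the generator level since $\Delta_{\Psi}e^{2\Psi}=2e^{2\Psi}\Delta\Psi\neq 0$.

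The paper avoids this trap by applying the unitary equivalence $U_{\Psi}$ of Remark~\ref{swsa33}.2 to $f_n=1_{K_n}e^{-\Psi}$, so that on the $\Psi$-side one is computing $e^{t\Delta_{\Psi}/2}1_{K_n}$ rather than the expectation of the Girsanov density; monotone convergence together with stochastic completeness then gives $e^{t\Delta_{\Psi}/2}1=1$, and it is this identity (not $E^x[Z_t]=1$) that produces the factor $1/\tilde{\Psi}(X_t)$ on the Feynman--Kac side. If you want to repair your argument along Girsanov lines, you would have to integrate a nonconstant test function such as $1/\tilde{\Psi}(X_t)^2$ against $dP^x_{\Psi}$ and then identify the result independently, which brings you back to the paper's semigroup computation.
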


\begin{proof} Let $(K_n)$ be a compact exhaustion of $\IR^{3n}$. Then for $f_n:=1_{K_n}e^{-\Psi}\in L^2(\IR^{3n})$ we have
\begin{align}\label{ende2}
e^{-t\lambda}e^{-\Psi(x)}e^{t \Delta_{\Psi}/2}(e^{\Psi}f_n)(x)=e^{-t (-\Delta/2+V)}f_n(x)= E^x\left[e^{-\int^t_0V(X_s)ds}f_n(X_t)\right],
\end{align}
where the first equality follows from the unitary equivalence from Remark \ref{swsa33} and the second from the Feynman-Kac formula \cite{Simon}. Taking $n\to\infty$ the RHS of (\ref{ende2}) tends to the LHS of (\ref{ende}) by monotone convergence. The LHS of (\ref{ende2}) is equal to
$$
e^{-t\lambda}e^{-\Psi(x)}e^{t \Delta_{\Psi}/2}(e^{\Psi}f_n)(x)=\frac{e^{-t\lambda}}{\tilde{\Psi}(x)}\int_{K_n} e^{t \Delta_{\Psi}/2}(x,y)d\mu_{\Psi}(y),
$$
which tends to $e^{-t\lambda}/\tilde{\Psi}(x)$ by monotone convergence and the stochastic completeness of $\IMM_{\Psi}$.

\end{proof}

\textbf{Acknowledgements:} The authors would like to thank Volker Bach, Sergio Cacciatori, Thomas Hoffmann-Ostenhof and Barry Simon for very valuable discussions.



\end{document}